\documentclass[11pt]{amsart}

\usepackage[T1]{fontenc}
\usepackage[utf8]{inputenc}
\usepackage{lmodern}
\usepackage{amsmath,amssymb,amsthm}
\usepackage{microtype}
\usepackage{hyperref}

\hypersetup{
  colorlinks=true,
  linkcolor=blue,
  citecolor=blue,
  urlcolor=blue
}

\newtheorem{theorem}{Theorem}
\newtheorem{lemma}[theorem]{Lemma}

\newtheorem*{theorem*}{Theorem}
\newtheorem{corollary}[theorem]{Corollary}
\theoremstyle{definition}
\newtheorem{definition}[theorem]{Definition}
\newtheorem{remark}[theorem]{Remark}

\newcommand{\R}{\mathbb{R}}
\newcommand{\C}{\mathbb{C}}
\newcommand{\Q}{\mathbb{Q}}

\title{On exotic Diophantine triples in $\R[X]$}
\author{Ana Jurasi\'c}
\address[A. Jurasi\'c]{Faculty of Mathematics\\University of Rijeka\\ Radmile Matej\v{c}i\'c 2\\51000 Rijeka\\Croatia}
\email{ajurasic@math.uniri.hr}
\date{}

\begin{document}
\maketitle

\begin{abstract}
Originally, an exotic Diophantine triple is a set $\{a,b,c\}$ of distinct nonzero rational numbers for which
\[
 a+1,\quad b+1,\quad c+1,\quad ab+1,\quad ac+1,\quad bc+1,\quad abc+1
\]
are all perfect squares.  We prove that there is no such triple in $\R[X]$, with at least one nonconstant element, if none of $a,b,c$ is equal to $1$.  Equivalently, under the distinct nonzero convention, every exotic Diophantine triple in $\R[X]$ with a nonconstant element must contain the element $1$.   
\end{abstract}

\noindent\textbf{2020 Mathematics Subject Classification.} 11D09, 11C08.

\smallskip
\noindent\textbf{Keywords.} Diophantine $m$-tuples, polynomial Diophantine tuples, squares in polynomial rings.

\section{Introduction}
A classical Diophantine $m$-tuple is a set of $m$ distinct positive integers such
that the product of any two of its distinct elements, increased by $1$, is a square. This problem has a long history, beginning with examples of Diophantus in the rational case and Fermat in the integer case (see \cite{Dujella2024}). Since then, many variants have been considered, obtained either by changing the observed ring or by modifying the square conditions imposed on the elements. \\

Recently, Dujella, Kazalicki, and Petri\v{c}evi\'c \cite{DKP} introduced the following seven-square variant of the Diophantine tuple condition over the rational numbers. Here is a more general definition:

\begin{definition}\label{def:exotic}
A set $\{a,b,c\}$ of distinct nonzero elements of a commutative ring $R$ with $1$ is called an \emph{exotic Diophantine triple} if
\begin{equation}\label{eq:seven}
 a+1,\quad b+1,\quad c+1,\quad ab+1,\quad ac+1,\quad bc+1,\quad abc+1
\end{equation}
are all perfect squares of some elements of $R$.
\end{definition}

Equivalently, $\{1,a,b,c\}$ is a Diophantine quadruple and $abc+1$ is also a square. Obviously, for every exotic triple, sets $\{a,b,c\}$ and $\{1,ab,c\}$ are Diophantine triples. Dujella, Kazalicki, and Petri\v{c}evi\'c \cite{DKP} proved that infinitely many such triples exist over $\Q$, while no such triples exist in positive integers. The closely related four-square condition that $ab+1$, $ac+1$, $bc+1$, and $abc+1$ are all perfect squares was considered by Dujella and Szalay \cite{DS}, who proved that there are infinitely many such triples $\{a,b,c\}$ of positive integers. Recently, Jurasi\'{c} \cite{AJ} proved that in $\R[X]$ there exist infinitely many four-square polynomial triples $\{a,b,c\}$.

The nonexistence of exotic Diophantine triples in the polynomial ring
$\mathbb Z[X]$ is quite obvious, since any polynomial triple over $\mathbb Z[X]$ would yield an exotic
Diophantine triple in positive integers. Motivated by the recent rational seven-square problem of Dujella, Kazalicki, and Petri\v{c}evi\'c \cite{DKP}, we consider the corresponding analogue in the polynomial ring $\R[X]$.   

There are immediate nonconstant examples if the element $1$ is allowed in such a triple.  For instance, for every nonconstant $k\in\R[X]$, the set
\begin{equation}\label{eq:family-with-one}
 \{k^2-1,\ (k+1)^2-1,\ 1\}
\end{equation}
is an exotic triple in $\R[X]$, since the seven expressions in \eqref{eq:seven} are
\[
 k^2,\quad (k+1)^2,\quad (\sqrt2)^2,\quad (k^2+k-1)^2,\quad k^2,\quad (k+1)^2,\quad (k^2+k-1)^2.
\]

The question considered here is whether, under the usual convention that the entries are distinct and nonzero, an exotic triple over $\R[X]$ can be nonconstant when the value $1$ is excluded.  The answer is negative. In other words, in $\R[X]$ the presence of $1$ in an exotic Diophantine triple is forced.  The family \eqref{eq:family-with-one} shows that this conclusion is sharp. The proof uses the regularity theorem of Filipin and Jurasi\'c \cite{FJ} for polynomial Diophantine quadruples in $\R[X]$. 

\begin{theorem}\label{thm:main}
There is no set $\{a,b,c\}\subset \R[X]$ of distinct nonzero polynomials such that:
\begin{enumerate}
\item at least one of $a,b,c$ is nonconstant,
\item none of $a,b,c$ is equal to $1$,
\item all seven polynomials in \eqref{eq:seven} are perfect squares in $\R[X]$.
\end{enumerate}
\end{theorem}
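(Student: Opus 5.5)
The plan is to reduce the problem to an explicit two-parameter family via the regularity theorem of Filipin and Jurasi\'c \cite{FJ}, and then show that the seventh condition, that $abc+1$ be a square, cannot hold unless one of the excluded cases occurs.

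\textbf{Step 1: reduction to a regular quadruple.} Since $a+1,b+1,c+1,ab+1,ac+1,bc+1$ are squares, the set $\{1,a,b,c\}$ is a Diophantine quadruple in $\R[X]$. It has four distinct elements, because none of $a,b,c$ equals $1$, and at least one element is nonconstant. By \cite{FJ} this quadruple is regular. Write
\[
a+1=r^2,\qquad b+1=s^2,\qquad ab+1=t^2 .
\]
Viewing $c$ as the fourth element extending the triple $\{1,a,b\}$, regularity gives
\[
c=c_\pm=1+a+b+2ab\pm 2rst .
\]
Some care is needed in applying \cite{FJ}: regularity is stated for the whole quadruple. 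If the theorem singles out a largest-degree element, I would use the symmetric form $(a+b-c-1)^2=4(ab+1)(c+1)$ of the regularity relation and its permutations. This yields the displayed formula for the element of largest degree. Then I relabel, treating $1$ as one of the three "small" elements, and handle separately the labelings in which $c$ is not the largest.

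\textbf{Step 2: the seventh condition.} Substitute $c_\pm$ into the seventh condition. Using $a=r^2-1$ and $b=s^2-1$, this reads
\[
ab\,c_\pm+1=w^2
\]
for some $w\in\R[X]$. A useful companion identity is
\[
c_+c_-=(1+a+b+2ab)^2-4(a+1)(b+1)(ab+1)=(a-b)^2-2a-2b-3 .
\]
This lets me control $ab\,c_+ +1$ and $ab\,c_-+1$ simultaneously. I would try to show that
\[
(ab\,c_++1)(ab\,c_-+1)
\]
is a polynomial in $a,b$ that is a square only in degenerate situations. If both factors are squares, their product is a square, which gives a necessary condition depending only on $a$ and $b$. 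I would then combine this with $t^2=ab+1$ and a degree/leading-coefficient comparison. Let $\deg a\ge \deg b$. Then $\deg c_\pm$ is either $2\deg a+2\deg b$-ish or, after cancellation in $c_-$, much smaller. Comparing the top coefficients of $ab\,c_\pm+1$ with those of a square, and then the next coefficients (an ``$\sqrt{\,\cdot\,}$ expansion'' argument, as in polynomial Diophantine tuple papers), should force a polynomial identity.

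\textbf{Step 3: the degenerate cases, and the main obstacle.} The identity from Step 2 should only be possible when $a=1$, $b=1$, $c=1$, $a=b$, or everything is constant. The family \eqref{eq:family-with-one} shows that the case with an entry equal to $1$ genuinely occurs, so the computation must detect exactly this. I expect the main obstacle to be this step: showing that $ab(1+a+b+2ab\pm2rst)+1$ is a square only in those cases. The sign $c_-$ is the harder one, because of cancellation of leading terms.

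My first attempt would be to work in terms of the parameters $r,s,t$ subject to $t^2=(r^2-1)(s^2-1)+1$, and to try to write $ab\,c_\pm+1$ as
\[
(\text{polynomial in } r,s,t)^2+(\text{correction}) .
\]
For instance, I would compare it with $(rst\pm(ab+\tfrac{a+b}{2}))^2$. The goal is to show that the correction is nonzero of degree too small to be absorbed by a square. If that fails, I would fall back on a descent argument: show that $\{1,ab,c\}$ extends to a quadruple and apply \cite{FJ} again, deriving a contradiction from a second regularity relation.
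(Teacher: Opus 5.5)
\textbf{There is a genuine gap.} Your Step 1 is correct and matches the paper's starting point. It is also worth recording that $c_{\pm}+1=(rs\pm t)^2$, which follows from $t^2=r^2s^2-r^2-s^2+2$. The problem is that the proposal stops exactly where the proof has to happen. You never show that $ab\,c_{\pm}+1$ fails to be a square; you only list strategies.

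The one concrete strategy is logically unsound. The hypothesis says $abc+1$ is a square for the actual $c$, which is only one of $c_{+},c_{-}$. Nothing makes both $ab\,c_{+}+1$ and $ab\,c_{-}+1$ squares, so a condition extracted from their product being a square excludes nothing. The other suggestions (a ``$\sqrt{\cdot}$ expansion'', comparison with $(rst\pm(ab+(a+b)/2))^2$, a descent through $\{1,ab,c\}$) are not carried out. You also never treat triples with a constant element. There degree arguments of this kind break down; for instance, $\deg(rs-t)$ is no longer small when $r$ is constant.

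\textbf{What closes the argument (as in the paper).}

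First, a constant element $k\neq0,1$ is impossible. For a nonconstant $f$ in the triple, $f+1=x^2$ and $kf+1=y^2$ give $(y-\sqrt{k}\,x)(y+\sqrt{k}\,x)=1-k\neq0$ in $\C[X]$, so $x$ is constant.

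Second, all seven conditions are symmetric in $a,b,c$. So instead of splitting into labelings, assume $\deg a\le\deg b\le\deg c$. Normalize $t$ to have the leading coefficient of $rs$, and write $c+1=z^2$. From $(rs-t)(rs+t)=r^2+s^2-2$ you get $\deg(rs-t)=\deg s-\deg r<\deg s\le\deg z$. This eliminates $c_{-}$, the case you expected to be hardest, by degree alone. Hence $z=rs+t$ after a sign change.

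Third, and this is the key idea you are missing: $abc+1=(t^2-1)(z^2-1)+1$ has the same shape as $ab+1=(r^2-1)(s^2-1)+1$, now with $\deg t=\deg z$.
\begin{itemize}
\item Write $abc+1=w^2$ with $w$ having the leading coefficient of $tz$. The identity $(tz-w)(tz+w)=t^2+z^2-2$ forces $tz-w=\lambda\in\R$.
\item Hence $(z-\lambda t)^2=(\lambda^2-1)t^2+2-\lambda^2$.
\item Unless $\lambda^2\in\{1,2\}$, this factors over $\C[X]$ into two constant factors whose difference is a nonzero multiple of $t$, which is absurd.
\item So $z=\pm t\pm1$ or $z=(\pm\sqrt2\pm1)t$.
\end{itemize}
Both outcomes contradict $z=rs+t$, since $rs$ and $t$ are nonconstant with the same leading coefficient. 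The first gives $rs=\pm1$ or $rs+2t=\pm1$. The second gives $rs=(\mu-1)t$ with $\mu\in\{\pm\sqrt2\pm1\}$, so $\mu-1\neq1$.
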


\section{Preliminaries}

We first record that, once $0$ and $1$ are excluded, a nonconstant exotic triple over $\R[X]$ has no constant element.

\begin{lemma}\label{lem:no-constant}
Let the set $\{a,b,c\}\subset\R[X]$, with at least one nonconstant element, satisfy the seven-square condition \eqref{eq:seven}.  If the elements are nonzero and none of them is equal to $1$, then all three elements are nonconstant.
\end{lemma}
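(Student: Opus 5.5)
The plan is to argue by contradiction, using only two of the seven square conditions. Suppose that, say, $c$ is a nonzero constant with $c\neq 1$, and that $a$ is nonconstant; by hypothesis (1) at least one element is nonconstant, and relabelling the elements loses nothing. From \eqref{eq:seven} we have $a+1=p^2$ and $ac+1=q^2$ for some $p,q\in\R[X]$. Since $a$ is nonconstant, $p$ is nonconstant. Eliminating $a$ gives the polynomial Pell-type identity
\[
 q^2-c\,p^2 = 1-c,
\]
and its right-hand side is a \emph{nonzero} constant exactly because $c\neq 1$. I will show this identity forces $p$ to be constant, which contradicts $a$ being nonconstant.

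\emph{Case $c>0$.} Factor the identity over $\R[X]$ as
\[
 (q-\sqrt{c}\,p)(q+\sqrt{c}\,p)=1-c\neq 0.
\]
Both factors are nonzero, and their degrees add up to $0$, so each is a nonzero constant. Adding and subtracting the two factors then shows that $q$ and $\sqrt{c}\,p$ are constants. Since $\sqrt c\neq 0$, $p$ is constant, a contradiction.

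\emph{Case $c<0$.} The identity reads $q^2+|c|\,p^2=1-c$. The leading coefficients of $q^2$ and of $|c|p^2$ are both positive, because the coefficients are real. Hence no cancellation can occur among the top-degree terms, and the left side has degree $2\max(\deg p,\deg q)$. Since the right side is constant, $p$ must be constant, again a contradiction.

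Together the two cases show that no element can be constant, so all of $a,b,c$ are nonconstant. The only real obstacle is the no-cancellation point: over $\R$ it rests on $\sqrt c$ being real when $c>0$, and on the positivity of leading coefficients of real squares when $c<0$. The hypothesis $c\neq 1$ is essential, since it is what makes the right-hand side nonzero; the example \eqref{eq:family-with-one} shows the lemma indeed fails when $1$ is allowed.
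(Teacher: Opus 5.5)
Your proof is correct and follows the same route as the paper. You eliminate the nonconstant element using $a+1=p^2$ and $ac+1=q^2$, obtain $q^2-cp^2=1-c\neq 0$, and deduce that $p$ is constant. The only difference is cosmetic: the paper factors $(q-\sqrt{c}\,p)(q+\sqrt{c}\,p)=1-c$ in $\C[X]$ for either sign of $c$, so it needs no separate positivity argument for the case $c<0$.
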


\begin{proof}
Assume that $k\in\{a,b,c\}$ is constant.  By hypothesis, $k\ne0,1$.  Let $f$ be a nonconstant element of the triple.  The seven-square condition gives polynomials $x,y\in\R[X]$ such that
\[
 f+1=x^2,\qquad kf+1=y^2.
\]
Eliminating $f$ gives
\begin{equation}\label{eq:constant-elim}
 y^2-kx^2=1-k.
\end{equation}
Then \eqref{eq:constant-elim} factors in $\C[X]$ as
\[
 (y-x\sqrt{k})(y+x\sqrt{k})=1-k.
\]
The right-hand side is a nonzero constant, so both factors are constants.  Hence $x$ is constant, so $f=x^2-1$ is constant, a contradiction.  
\end{proof}

In the proof of the main theorem, we use the regularity theorem for polynomial Diophantine quadruples over $\R[X]$.

\begin{theorem}[Filipin and Jurasi\'c \cite{FJ}]\label{thm:FJ}
Let $\{a,b,c,d\}$ be a set of four distinct nonzero polynomials in $\R[X]$, not all constant.  Suppose that the product of any two distinct elements of that set, increased by $1$, is a perfect square in $\R[X]$.  Then the quadruple is regular; equivalently, for any labeling $a,b,c,d$ of the four polynomials in that set,
\begin{equation}\label{eq:regularity}
 (a+b-c-d)^2=4(ab+1)(cd+1).
\end{equation}
\end{theorem}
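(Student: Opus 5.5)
The plan is to reduce regularity to an extension statement for triples. First observe that
\[
(a+b-c-d)^2-4(ab+1)(cd+1)=a^2+b^2+c^2+d^2-2(ab+ac+ad+bc+bd+cd)-4abcd-4
\]
is a symmetric polynomial in $a,b,c,d$. So \eqref{eq:regularity} for one labeling implies it for all labelings, which justifies the word ``equivalently'' in Theorem~\ref{thm:FJ}. Relabel so that $\deg a\le\deg b\le\deg c\le\deg d$. Write $ab+1=r^2$, $ac+1=s^2$, $bc+1=t^2$. Viewed as a quadratic in $d$, \eqref{eq:regularity} has the two roots
\[
d_\pm=a+b+c+2abc\pm2rst .
\]
Hence the goal is to show that $d\in\{d_+,d_-\}$.

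For the fourth element, set $ad+1=x^2$, $bd+1=y^2$, $cd+1=z^2$. Eliminating $d$ gives the simultaneous Pellian system
\[
az^2-cx^2=a-c,\qquad bz^2-cy^2=b-c .
\]
The units $s+\sqrt{ac}$ and $t+\sqrt{bc}$ in the relevant quadratic extensions of $\R[X]$ describe all solutions. By the standard descent argument (the degree version of the Dujella--Fuchs method for $\C[X]$ and $\mathbb Z[X]$), every solution has the form $z=v_m=w_n$. Here $(v_m)$ and $(w_n)$ are binary recurrences with characteristic roots $s\pm\sqrt{ac}$ and $t\pm\sqrt{bc}$, whose initial terms come from fundamental solutions $z_0,z_1$ of bounded degree. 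In $\R[X]$ the fundamental solutions can be pinned down to $z_0=\pm1$ or to finitely many explicit small-degree options, with $\deg z_0$ bounded in terms of $\deg a,\deg c$. Next, $\deg v_m$ and $\deg w_n$ grow linearly in $m,n$, with slopes $\tfrac12\deg(ac)$ and $\tfrac12\deg(bc)$. Comparing degrees in $v_m=w_n$ then gives a linear relation between $m$ and $n$.

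Then apply the congruence method. Reduce $v_m$ and $w_n$ modulo $c$, where the recurrences become, up to sign, $z_0+c\,(\tfrac12 a m^2 z_0+ m s_0\cdots)$ and the analogous expression with $b,n$. Combined with the degree relation, this forces $m,n\le 2$ unless a degree inequality is violated. The case $m=n=0$ gives $d=0$, which is excluded. The case with index $1$ or $2$ gives $z=v_1$ or $v_2$, and from it one computes $d=(z^2-1)/c=d_\pm$, or the next regular extension of a smaller triple; in the latter case one runs the same argument on that triple.

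Constant elements need separate care, since the hypothesis only says not all four are constant. The argument of Lemma~\ref{lem:no-constant} adapts directly: if two elements are constant and another is nonconstant, one gets a norm equation with constant right-hand side. This forces either a restricted shape of the polynomials or a product $kk'+1$ behaving like a square constant, so several of these cases collapse. Otherwise the degree estimates above go through with $\deg a=0$.

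I expect the main obstacle to be the gap/degree step. This means showing that, when $d\ne d_\pm$, the congruence modulo $c$ yields $\deg c\le$ a quantity smaller than what the growth of $v_m$ allows. Equivalently, one needs a lower bound $\deg d\ge 2\deg c+\deg b$ for any irregular extension, contradicted by the upper bound coming from $m,n\le2$. I would try first to extract the sharp expansion of $v_m\bmod c^2$ and compare leading coefficients, as in the $\mathbb Z[X]$ proof of Dujella--Jurasi\'c. The genuinely real-coefficient issue is signs of leading coefficients, because squares must have positive leading coefficient in $\R[X]$. I would handle this by tracking the leading terms of $s,t$ explicitly.
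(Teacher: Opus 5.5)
The paper gives no proof of this statement. It quotes it from Filipin and Jurasi\'c and uses it as a black box, so your proposal has to stand on its own. Your setup is sound: $r_4$ is symmetric, which is exactly what justifies ``equivalently''; the roots of $r_4(a,b,c,\cdot)=0$ are $d_\pm=a+b+c+2abc\pm2rst$; and the Pellian system $az^2-cx^2=a-c$, $bz^2-cy^2=b-c$ is correct. The constant case is simpler than you make it. If $k\neq k'$ were two constant elements and $f$ a nonconstant one, then with $kf+1=x^2$, $k'f+1=y^2$ the equation $k'x^2-ky^2=k'-k\neq0$ factors over $\C[X]$ into two constants, forcing $f$ constant; so at most one element is constant. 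The real problem is that from the congruence step on you describe what a proof would have to achieve rather than achieving it. The claim that carries the whole theorem is that every coincidence $v_m=w_n$ gives $d\in\{d_+,d_-\}$, and you only assert it (``this forces $m,n\le2$ unless a degree inequality is violated''). Both ingredients you name for it are unproved: the congruence bound on the indices and the gap principle $\deg d\ge 2\deg c+\deg b$ for irregular extensions. You flag this yourself as the main obstacle.

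The mechanism also does not work as you set it up. Turning $v_m\equiv w_n\pmod{c^2}$ into a polynomial identity such as $am^2z_0+2smx_0=bn^2z_1+2tny_1$ (and, before that, getting $z_0=\pm z_1$) requires every term to have degree below $\deg c$. With your ordering this fails in the most basic examples. In the regular quadruple $\{X,\,X+2,\,4X+4,\,4(X+1)(2X+1)(2X+3)\}$ your base triple consists of three linear polynomials, so reduction modulo $c$ is just evaluation at $X=-1$. The term $bn^2z_1$ cannot be dropped, and the congruence yields one real equation rather than a polynomial identity. The same happens for every triple $\{a,b,a+b+2r\}$ with $\deg a<\deg b$, since then $\deg c=\deg b$. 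So the equal-degree configurations, which are where the polynomial problem is delicate, are not handled at all. The fundamental solutions have the same problem. The descent only bounds their degrees, and bounded degree gives no finiteness over $\R$, so the ``finitely many explicit options'' are unsupported. Unless you actually prove $z_0=\pm1$, the terms $am^2z_0$ and $smx_0$ need not have degree below $\deg c$. Your remark that $m=n=0$ gives $d=0$ also holds only when $z_0=\pm1$. Finally, the escape clause ``or the next regular extension of a smaller triple; then run the same argument on that triple'' is not a valid step. You give no descent measure, and information about a different triple does not place $d$ in $\{d_+,d_-\}$ for $\{a,b,c\}$. What you have is a reasonable roadmap in the Dujella--Fuchs style. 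The substance of the cited result is missing: pinning down the fundamental solutions and doing the case analysis over degree configurations.
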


Using the same notation as in \cite{DKP}, the expression \eqref{eq:regularity} can be replaced by $r_4(a,b,c,d)=0$, where
\[
 r_4(a,b,c,d)=(a+b-c-d)^2-4(ab+1)(cd+1).
\]
Then, $r_4$ is a symmetric polynomial in $a,b,c,d$. \\

Also, in the proof of the main theorem, we will use the following result:
\begin{lemma}\label{lem:equal-degree}
Let $p,q\in\R[X]$ be nonconstant polynomials with $\deg p=\deg q$.  If
\begin{equation}\label{eq:PQ-square}
 (p^2-1)(q^2-1)+1
\end{equation}
is a square in $\R[X]$, then either
\[
 q=\pm p\pm1,
\]
or
\[
 q=(\pm\sqrt2\pm1)p,
\]
where the signs $\pm$ are independent.
\end{lemma}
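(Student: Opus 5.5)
Write $n=\deg p=\deg q\ge 1$ and suppose $(p^2-1)(q^2-1)+1=r^2$ with $r\in\R[X]$. The plan is to pin down $r$ up to a constant correction to $\pm pq$, which turns the problem into a binary quadratic form in $p,q$ with constant value, and then to factor that form.

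\emph{Step 1: $r$ is $\pm pq$ plus a constant.} The left-hand side equals $p^2q^2-p^2-q^2+2$. Its degree is $4n$ and its leading coefficient is $\mathrm{lc}(p)^2\mathrm{lc}(q)^2$. Hence $\deg r=2n$ and $\mathrm{lc}(r)=\pm\mathrm{lc}(p)\mathrm{lc}(q)$. Replacing $r$ by $-r$ if necessary, we may assume $\mathrm{lc}(r)=\mathrm{lc}(p)\mathrm{lc}(q)$. Put $s=pq-r$, so that $\deg s<2n$ or $s=0$. Substituting $r=pq-s$ gives
\[
 2pqs-s^2=p^2+q^2-2 .
\]
If $s=0$, then $p^2+q^2=2$. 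This is impossible, because $p^2+q^2$ has degree $2n$ with leading coefficient $\mathrm{lc}(p)^2+\mathrm{lc}(q)^2>0$. So $s\neq0$. Let $m=\deg s<2n$. Then $2n+m>2m$, so the left side has degree exactly $2n+m$, while the right side has degree at most $2n$. This forces $m=0$, so $s\in\R\setminus\{0\}$ is a constant, and
\[
 p^2-2s\,pq+q^2=2-s^2 .
\]

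\emph{Step 2: case analysis on $s$.} The form $u^2-2suv+v^2$ splits over $\R$ as $(u-\lambda_1v)(u-\lambda_2v)$ with $\lambda_{1,2}=s\pm\sqrt{s^2-1}$ exactly when $|s|\ge1$.

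If $|s|<1$, the form is positive definite. Then $p^2-2spq+q^2$ has degree $2n$ with positive leading coefficient $\mathrm{lc}(p)^2-2s\,\mathrm{lc}(p)\mathrm{lc}(q)+\mathrm{lc}(q)^2$, so it cannot equal a constant. This case is a contradiction.

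If $s=\pm1$, the identity becomes $(p\mp q)^2=1$, so $p\mp q=\pm1$. This gives $q=\pm p\pm1$.

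If $|s|>1$, then $\lambda_1\ne\lambda_2$ are real with $\lambda_1\lambda_2=1$, and $(p-\lambda_1q)(p-\lambda_2q)=2-s^2$.
\begin{itemize}
\item If $s^2\neq2$, both factors are nonzero polynomials with constant product, hence both are constants. Subtracting them shows $(\lambda_1-\lambda_2)q$ is constant, contradicting that $q$ is nonconstant.
\item Hence $s=\pm\sqrt2$, and then one factor vanishes, so $p=\lambda q$ with $\lambda=\pm\sqrt2\pm1$. Since $(\sqrt2+1)^{-1}=\sqrt2-1$, we get $q=\lambda^{-1}p\in\{(\pm\sqrt2\pm1)p\}$.
\end{itemize}

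\emph{Main obstacle.} The only delicate step is the degree comparison in Step 1 that forces $s$ to be constant. It relies on normalizing the sign of $r$ so that $\deg s<2n$; without that normalization the branch $r\approx-pq$ would give $\deg s=2n$ and spoil the comparison. After this, the problem is a quadratic form taking a constant value, and the factorization over $\R$ together with the fact that units of $\R[X]$ are constants finishes it. The hypothesis $\deg p=\deg q$ is used only through the leading-term bookkeeping.
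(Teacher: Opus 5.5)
Your proposal is correct and follows essentially the paper's route: normalize the sign of the square root, use degrees to force $pq-r$ to be a constant, reduce to $p^2-2s\,pq+q^2=2-s^2$, and read off the cases $s^2=1$ and $s^2=2$. The only difference is cosmetic: the paper handles all $s^2\notin\{1,2\}$ at once by completing the square and factoring over $\C[X]$, while you stay over $\R$ and split into the positive-definite case $|s|<1$ (leading-coefficient argument) and the real-split case $|s|>1$.
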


\begin{proof}
Let $h\in\R[X]$ satisfy
\begin{equation}\label{h^2}
 h^2=(p^2-1)(q^2-1)+1=p^2q^2-p^2-q^2+2.
\end{equation}
Replacing $h$ by $-h$ if necessary, assume that $h$ has the same leading coefficient as $pq$.  Since $p$ and $q$ have the same positive degree, it holds $p^2+q^2-2\neq 0$,
\[
 \deg(pq+h)=2\deg p,
\]
and
\[
 \deg(p^2+q^2-2)=2\deg p.
\]
 By \eqref{h^2}, it holds $(pq-h)(pq+h)=p^2+q^2-2$, so $pq-h$ is a constant. Write $pq-h=\lambda$, with $\lambda\in\R$.  Substituting $h=pq-\lambda$ into \eqref{h^2}, gives
\begin{equation}\label{eq:lambda-eq}
 (q-\lambda p)^2=(\lambda^2-1)p^2+2-\lambda^2.
\end{equation}

If $\lambda^2\notin\{1,2\}$, then the right-hand side of \eqref{eq:lambda-eq} has the form $\alpha p^2+\beta$ with $\alpha,\beta\in\R$ and $\alpha\beta\ne0$.  Over $\C[X]$, equation \eqref{eq:lambda-eq} gives a factorization
\[
 (q-\lambda p-p\sqrt{\alpha})(q-\lambda p+p\sqrt{\alpha})=\beta.
\] Both factors are constants, since their product is a nonzero constant.  Hence, their difference $-2p\sqrt{\alpha}$ is constant, contradicting the assumption that $p$ is nonconstant. 

If $\lambda^2=1$, then \eqref{eq:lambda-eq} gives $(q-\lambda p)^2=1$, so $q=\lambda p\pm1=\pm p\pm1$.  If $\lambda^2=2$, then \eqref{eq:lambda-eq} gives $(q-\lambda p)^2=p^2$, so $q=(\lambda\pm1)p=(\pm\sqrt2\pm1)p$.
\end{proof}

\section{Proof of the main theorem}

\begin{proof}[Proof of Theorem \ref{thm:main}]Assume that $\{a,b,c\}\subset\R[X]$ satisfies the hypotheses.  By Lemma \ref{lem:no-constant}, $a$, $b$, $c$ are nonconstant. Let
\[
 \deg a\le \deg b\le \deg c.
\] Since $a+1,b+1,c+1$ are perfect squares, write
\begin{equation}\label{eq:xyz}
 a=x^2-1,\qquad b=y^2-1,\qquad c=z^2-1,
\end{equation}
with $x,y,z\in\R[X]$.  Then $x,y,z$ are nonconstant and
\[
 \deg x\le \deg y\le \deg z.
\]
Let $r\in\R[X]$ satisfy
\begin{equation}\label{eq:r}
 ab+1=r^2.
\end{equation}
Changing $r$ to $-r$ if necessary, assume that $r$ has the same leading coefficient as $xy$.  From \eqref{eq:xyz} and \eqref{eq:r}, we have
\begin{equation}\label{eq:r-square}
 r^2=(x^2-1)(y^2-1)+1=x^2y^2-x^2-y^2+2
\end{equation}
and $\deg r=\deg x+
\deg y$.

The set $\{1,a,b,c\}$ is a Diophantine quadruple in $\R[X]$, with at least one nonconstant element.  By Theorem \ref{thm:FJ}, this quadruple is regular.  Thus $r_4(1,a,b,c)=0$.  Since $r_4$ is symmetric, we may write the regularity relation as $r_4(a,b,1,c)=0$. Hence, $(a+b-1-c)^2=4(ab+1)(c+1)$. Using \eqref{eq:xyz}, \eqref{eq:r}, and \eqref{eq:r-square}, we get 
\begin{equation}\label{eq:c-plus-one}
 c+1=(xy\pm r)^2.
\end{equation}

We now exclude the minus sign in \eqref{eq:c-plus-one}.  Since $r$ and $xy$ have the same degree and the same leading coefficient, it follows that
\begin{equation}\label{eq:xy+r}
 \deg(xy+r)=\deg x+
\deg y.
\end{equation}
Moreover, from \eqref{eq:r-square}, it follows
\begin{equation}\label{eq:xyr-factor}
 (xy-r)(xy+r)=x^2y^2-r^2=x^2+y^2-2.
\end{equation}
The polynomial on the right-hand side has degree $2\deg y$, since $\deg x\le\deg y$ and the leading coefficients of $x^2$ and $y^2$ cannot cancel over $\R$.  Combining this with \eqref{eq:xy+r} and \eqref{eq:xyr-factor}, we get
\[
 \deg(xy-r)=\deg y-
\deg x<\deg y,
\]
since $x$ is nonconstant.  But, $\deg z\ge\deg y$.  Hence, $z$ cannot be equal, even up to sign, to $xy-r$.  Replacing $z$ by $-z$ if necessary, we may assume that
\begin{equation}\label{eq:z=xyr}
 z=xy+r.
\end{equation}

Finally, we use the condition  that $abc+1$ is a square.  Since $ab=r^2-1$ and $c=z^2-1$, the polynomial
\begin{equation}\label{eq:rz-square}
 (r^2-1)(z^2-1)+1
\end{equation}
is a square in $\R[X]$.  The polynomials $r$ and $z=xy+r$ are nonconstant and have the same degree.  Applying Lemma \ref{lem:equal-degree}, with $p=r$ and $q=z$, we obtain two possible cases. Suppose first that
\[
 z=\pm r\pm1.
\] By \eqref{eq:z=xyr}, either $xy=\pm1$ or $xy+2r=\pm1$.  Both options are not possible, since $x$ and $y$ are nonconstant and  nonconstant polynomials $r$ and $xy$ have the same degree and the same leading coefficients. The other possibility is
\[z=\mu r,
\]
for some $\mu\in\{\pm\sqrt2\pm1\}.$ By \eqref{eq:z=xyr}, it easily follows $xy=(\mu-1)r.$ Hence, $\mu-1\ne0$.  Substituting $r=xy/(\mu-1)$ into \eqref{eq:r-square}, yields
\begin{equation}\label{eq:mu-comparison}
 \frac{x^2y^2}{(\mu-1)^2}=x^2y^2-x^2-y^2+2.
\end{equation}
Since $x$ and $y$ are both nonconstant, the degree of $x^2y^2$ is strictly larger than the degrees of $x^2$ and $y^2$.  Comparing the leading coefficients in \eqref{eq:mu-comparison}, gives $(\mu-1)^2=1$.  However, none of the four values $\mu\in\{\pm\sqrt2\pm1\}$  satisfies that condition.\end{proof}

\begin{corollary}
Let $A$ be a subring of $\R$ containing $1$.  Under the distinct nonzero convention, every exotic Diophantine triple in $A[X]$ with a nonconstant element contains $1$. In particular, this holds over $K[X]$, for every subfield $K\subseteq\R$. Since $2$ is not a square in $\mathbb{Q}[X]$, there are no exotic Diophantine triples in $\mathbb{Q}[X]$ with a nonconstant element.
\end{corollary}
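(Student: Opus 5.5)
The plan is to reduce the corollary to Theorem \ref{thm:main} by base change along the inclusion $A[X]\subseteq\R[X]$, and then to handle the rational case by a separate check on the family \eqref{eq:family-with-one}.

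\emph{Step 1: transfer to $\R[X]$.} Since $A\subseteq\R$ is a subring containing $1$, we have $A[X]\subseteq\R[X]$ as a subring with the same unit. Take an exotic Diophantine triple $\{a,b,c\}$ in $A[X]$ with a nonconstant element. Its elements are distinct and nonzero in $\R[X]$ as well. Each of the seven expressions in \eqref{eq:seven} is a square of some element of $A[X]$, hence a square in $\R[X]$. If none of $a,b,c$ equals $1$, the triple satisfies all three hypotheses of Theorem \ref{thm:main}, which is impossible. Hence one of the elements equals $1$. The case of a subfield $K\subseteq\R$ is the special case $A=K$.

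\emph{Step 2: the case $\Q[X]$.} By Step 1 we may assume, without loss of generality, that $c=1$. Then $c+1=2$ must be a square in $\Q[X]$. Suppose $2=g^2$ with $g\in\Q[X]$. Comparing degrees shows that $g$ is constant, so $g\in\Q$ with $g^2=2$, which contradicts the irrationality of $\sqrt2$. So no such triple exists in $\Q[X]$.

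The only point needing care is that "square in $A[X]$" implies "square in $\R[X]$". This is immediate, since the witness polynomial already lies in $A[X]\subseteq\R[X]$. I do not expect any real obstacle. The argument also shows more generally that over any $A$ in which $2$ is not a square, no exotic triples with a nonconstant element exist.
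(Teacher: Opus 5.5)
Your proposal is correct and follows the paper's proof. The paper's argument is exactly your Step 1: regard the triple as lying in $\R[X]$, where the seven squares, distinctness, nonzeroness and nonconstancy all persist, and apply Theorem~\ref{thm:main}. Your Step 2 spells out the $\Q[X]$ consequence that the paper leaves as the one-line justification inside the statement: if $1$ is in the triple then $1+1=2$ would have to be a square in $\Q[X]$, which degree comparison and the irrationality of $\sqrt2$ rule out.
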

\begin{proof}
Any exotic Diophantine triple in $A[X]$ is also one in $\R[X]$.  The conclusion follows directly from Theorem \ref{thm:main}.
\end{proof}

\begin{remark}
If the zero polynomial is allowed, then for any nonconstant $t\in\R[X]$ the set
\[
 \{0,\ t^2-1,\ (t+1)^2-1\}
\]
satisfies the seven-square condition \eqref{eq:seven} and contains no element equal to $1$.  It is not an exotic Diophantine triple from Definition \ref{def:exotic}, because exotic triples are required to have nonzero elements.
\end{remark}

\section{Acknowledgements}
This work was supported by the Croatian Science Foundation Grant No.~IP-2022-10-5008. A.~J. was also supported by the European Union -- NextGenerationEU, project number uniri-iz-25-62-ALGEBRA. During the preparation of this work, the author utilized ChatGPT-5.5 Pro to assist with searching for possible exotic triples. The author independently verified, refined, and wrote all mathematical arguments, and takes full responsibility for the correctness and originality of the final results. The author would like to thank Professor Andrej Dujella for his support and suggestions.

\end{document}